\theoremstyle{plain}
\newtheorem{lem}{Lemma}[section]
\newtheorem{thm}{Theorem}[section]
\newtheorem{cor}{Corollary}
\theoremstyle{definition}
\newtheorem{remark}{Remark}
\newtheorem{dfn}{Definition}[section]
\numberwithin{equation}{section}
\newcommand{\D}{{\mathbb D}}
\newcommand{\B}{{\mathbb B}}
\newcommand{\Ss}{{\mathbb S}}
\newcommand{\R}{{\mathbb R}}
\begin{document}

\author{Kari Astala and Vesna Manojlovi\'c}
\address{Le Studium, Loire Valley Institute for Advanced Studies, Orleans \& Tours, France, MAPMO University of Orleans and Department of Mathematics, University of Helsinki, Helsinki, Finland; Faculty of Organizational Sciences, Univ. of Belgrade, Mathematical Institute, SASA, Belgrade,
Serbia}
\email{\rm kari.astala@helsinki.fi, vesnam@fon.bg.ac.rs}
\thanks{
K.A.~was supported by the Academy of Finland (SA) grant 12719831. V.M.~was supported by Ministry of Science, Serbia, project OI174017.}

\title{On Pavlovic's theorem in space}
\subjclass[2010]{Primary 30C65}
\keywords{bi-Lipschitz maps, harmonic mappings, quasiregular
mappings}
\maketitle

\begin{abstract}
We study higher dimensional counterparts to the well-known theorem of Pavlovic \cite{pa3},  that   every harmonic quasiconformal mapping of the disk is bi-Lipschitz.  
\end{abstract}

\section{Introduction}

In his influential paper \cite{pa3} Pavlovic  showed that harmonic quasiconformal mappings of the unit disk $\D$ onto itself are bi-Lipschitz mappings. The paper has %has been very influential and 
initiated an extensive investigation  between the Lipschitz conditions and  harmonic quasiconformal mappings, see e.g.  \cite{abm}, \cite{bm}, \cite{k2}, \cite{k4}, \cite{kap},Ê %\cite{km}, \cite{kp}, 
\cite{mv} and their references. %[Improve/expand the list !]

In this paper we study  counterparts of Pavlovic's theorem in higher dimensions. 

\begin{thm} \label{main}
 Suppose $f:\B^3 \to \B^3$ is a harmonic quasiconformal mapping, which is also %. Then % $f$ is Lipschitz-continuous. Iif, in addition, $f$ is
   a gradient mapping,
that is $f = \nabla u$ for some  function $u$ harmonic in the unit ball $\B^3$. Then $f$ is a bi-Lipschitz mapping.
\end{thm}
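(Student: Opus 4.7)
The plan is to prove bi-Lipschitz by establishing two-sided uniform control on the matrix derivative $Df = D^2 u$.

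\emph{Step 1 (pointwise rigidity of the Hessian).} Harmonicity of $u$ forces $\operatorname{tr}(D^2u)=\Delta u=0$, so if $\lambda_1 \ge \lambda_2 \ge \lambda_3$ are the real eigenvalues of the symmetric matrix $D^2u(x)$ then $\lambda_1+\lambda_2+\lambda_3=0$. Since $f$ is sense-preserving and $K$-quasiconformal, $|Df|^3 \le K J_f$, i.e., $(\max_i|\lambda_i|)^3 \le K\,\lambda_1\lambda_2\lambda_3$; the orientation-preserving case forces $\lambda_1>0>\lambda_2,\lambda_3$ and $\lambda_1=|\lambda_2|+|\lambda_3|$, so the QC inequality becomes $(|\lambda_2|+|\lambda_3|)^2 \le K|\lambda_2||\lambda_3|$. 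Solving gives that $|\lambda_1|,|\lambda_2|,|\lambda_3|$ are pairwise comparable with ratios depending only on $K$, and in particular the smallest singular value $\ell(Df(x))$ is comparable to $|Df(x)|$ uniformly in $x$. The bi-Lipschitz claim is thus reduced to the scalar two-sided bound $0<c \le |Df(x)| \le C$ on $\B^3$.

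\emph{Step 2 (upper bound).} Since $f=\nabla u$ maps $\B^3$ into $\B^3$, $|\nabla u|\le 1$. Each component $u_{x_i}$ is harmonic (partial derivatives commute with $\Delta$), so $f$ is itself a harmonic quasiconformal self-mapping of the three-ball. The Lipschitz regularity of such mappings, in the form proved by Kalaj and collaborators cited in the introduction, yields $|Df|\le L(K)$ on $\B^3$. A self-contained derivation would start from the Bochner identity $\Delta|\nabla u|^2 = 2|D^2u|_F^2$, combine $|\nabla u|\le 1$ with the QC inequality $|D^2u|^3\le K J_f$ and the volume bound $\int_{\B^3} J_f \le |\B^3|$ to extract integral control of $D^2u$, and then upgrade to an $L^\infty$ bound via the mean-value inequality applied to the harmonic entries of $D^2u$ and boundary estimates from quasiconformality.

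\emph{Step 3 (lower bound).} Suppose for contradiction that $\rho_n:=|D^2u(x_n)|\to 0$ along some $x_n\in\B^3$, and pass to a subsequence with $x_n\to x^\ast\in\overline{\B}^3$. Rescale to
\[
v_n(y)=\frac{u(x_n+r_n y)-u(x_n)-r_n\nabla u(x_n)\cdot y}{r_n^2\,\rho_n},
\]
with $r_n$ comparable to $\operatorname{dist}(x_n,\partial\B^3)$. Then each $v_n$ is harmonic on an expanding region, $v_n(0)=\nabla v_n(0)=0$, $|D^2v_n(0)|=1$, and $\nabla v_n$ remains $K$-quasiconformal with trace-free Hessian. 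Reshetnyak's convergence theorem for quasiconformal mappings, together with normal-family estimates for harmonic functions, produces along a further subsequence a locally uniform limit $v_\infty$ which is harmonic on $\R^3$ (or a half-space), nonconstant, with $\nabla v_\infty$ an everywhere-defined $K$-quasiconformal gradient mapping. A Liouville-type rigidity statement for entire harmonic QC gradient mappings in dimension three forces $v_\infty$ to be a quadratic polynomial of a very special form; matching this against the upper bound from Step 2 and the normalization $|D^2v_n(0)|=1$, while using the eigenvalue comparability from Step 1 to preclude collapse, produces the desired contradiction.

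The main obstacle is Step 3: calibrating the blow-up so that harmonicity, gradient form and quasiconformality all pass to a nontrivial limit, and then invoking (or proving) the appropriate Liouville/rigidity statement for trace-free harmonic quasiconformal Hessian fields on $\R^3$. Step 1 is elementary linear algebra but is what converts the bi-Lipschitz question into the scalar control problem, and Step 2 is essentially available in the existing literature on harmonic quasiconformal self-maps of the ball.
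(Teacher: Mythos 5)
Your Steps 1 and 2 are essentially sound. Step 1 is really just the pointwise definition of $K$-quasiconformality (it shows $\ell(Df(x))\asymp |Df(x)|$ with constants depending only on $K$, which holds for any quasiconformal map, trace-free Hessian or not), and it correctly reduces the problem to two-sided bounds on $|Df|$ --- though note it tacitly assumes $J_f\neq 0$, which in dimension three is exactly Lewy's theorem for harmonic gradient homeomorphisms. Step 2 is the content of the paper's Theorem \ref{3.1}: the paper gives a self-contained proof by applying the differential inequality $|\Delta w|\leq a|\nabla w|^2+b$ to $w=1-|f|^2$, starting the bootstrap from Gehring's higher integrability $Df\in L^{p_0}$, $p_0>n$, and iterating the Sobolev embedding; your appeal to Kalaj's result or to a Bochner-identity argument is acceptable here.

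The genuine gap is Step 3, which is the heart of the theorem, and your blow-up scheme does not close it. First, after the normalization $|D^2v_n(0)|=1$ the degeneration $\rho_n\to 0$ leaves no trace in the limit: even granting compactness and a Liouville theorem, the limit being a nondegenerate quadratic polynomial (e.g.\ $v_\infty=x_1^2-\tfrac12 x_2^2-\tfrac12 x_3^2$, whose gradient is a linear quasiconformal map) is perfectly consistent with every hypothesis, so no contradiction is produced. Second, the normal-family step is not justified: the global bound $|D^2u|\leq C$ only gives $|D^2v_n|\leq C/\rho_n\to\infty$ away from the origin, and when $x_n$ tends to $\partial\B^3$ the limit lives on a half-space, where the needed Liouville/rigidity statement is even less accessible. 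The paper's mechanism is entirely different and concrete: by Lewy the Hessian determinant of a harmonic gradient homeomorphism in $\R^3$ does not vanish, and by Gleason--Wolff $\log J_f=\log\det(\mathcal H_u)$ is superharmonic; the sub-mean-value property then gives $J_f(x)\geq \alpha_f(x)^3$ for the Astala--Gehring average derivative $\alpha_f$ of \eqref{ave}, and $\alpha_f$ is bounded below by $c\,d(f(0),\partial\Omega)>0$ via Harnack's inequality applied to the positive harmonic functions $z\mapsto d(f(z),p)$ (for supporting hyperplanes $p$ of the convex target) combined with the quasiconformal Koebe distortion theorem (Theorem \ref{agt}). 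None of these ingredients --- Lewy, Gleason--Wolff, superharmonicity of $\log J_f$, or the average derivative --- appears in your proposal, and without a substitute for them the lower bound remains unproved.
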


In two dimensions  Pavlovic  made a deep and detailed analysis of the boundary values of $f$; analysing them he achieved  the Lipschitz-property for every harmonic quasiconformal mapping of the disk.
In higher dimensions Pavlovic's approach seems difficult to work with; instead it would seem conceivable that the Lipschitz-property follows by the regularity theory of elliptic PDE's. In fact, such an approach was done by Kalaj \cite{k4}. However, the proof in \cite{k4} is rather long and technical, and one of the purposes of this note is to give 
%Therefore for readers convenience we give 
a simple  and self-contained argument showing the Lipschitz property in all dimensions.
%, which will also  lead to generalizations of the results in \cite{k4} (?).

Thus the  main difficulty is to find lower bounds for  $|f(x) - f(y)|$ in terms of the distance between $x$ and $y$. In general dimensions it is not even known if  harmonic quasiconformal mappings of the ball have non-vanishing Jacobian. On the other hand, in three dimension Lewy \cite{lewy} proved that for homeomorphic harmonic gradient mappings the Jacobian determinant has no zeroes, and building on this together with work of Gleason and Wolff \cite{gw}  one  arrives at Theorem \ref{main}. 

\section{Lipschitz properties in higher dimensions}

We start with Lipschitz properties for harmonic quasiconformal mappings of the ball, and consider Lipschitz bounds in more general domains  in subsequent sections. %In the case of the unit ball we  present a quick argument. 

\begin{thm} \label{3.1}
If $n \geq 2$  and $f : \B^n \to \B^n$ is a harmonic and $K$-quasiconformal mapping, then 
$$ |f(x) - f(y)| \leq L|x-y|, \qquad x,y \in \B^n,
$$
where  $L$ depends only on the distortion $K$,  dimension $n$ and  ${ \rm dist}(f(0),\Ss^{n-1})$.
\end{thm}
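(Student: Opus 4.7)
The plan is to combine the harmonic gradient estimate on small balls with a quasiconformal ring-modulus bound to get the pointwise inequality $|Df(x)|\lesssim(1-|f(x)|)/(1-|x|)$, and then promote it to a uniform Lipschitz bound by a normal-families/blow-up contradiction. For $x\in\B^n$ write $d=1-|x|$. Differentiating the Poisson integral of $f$ on $B(x,d/2)\subset\B^n$ gives $|Df(x)|\leq(C_n/d)\operatorname{diam} f(\overline{B(x,d/2)})$. The spherical annulus $B(x,d)\setminus\overline{B(x,d/2)}$ has a universal conformal modulus, so its $K$-quasiconformal image is a ring of modulus at least $m_0(K,n)>0$; the outer complement $\R^n\setminus f(B(x,d))$ contains the unbounded set $\R^n\setminus\B^n$, so the Teichm\"uller--V\"ais\"al\"a ring inequality bounds the inner continuum by $\operatorname{diam} f(\overline{B(x,d/2)})\leq C(K,n)(1-|f(x)|)$. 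Combining these,
\[
|Df(x)|\leq C(K,n)\,\frac{1-|f(x)|}{1-|x|}. \qquad (\star)
\]

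Next I would argue by contradiction to upgrade $(\star)$ to a uniform bound. Assume no $L$ exists; pick harmonic $K$-quasiconformal $f_k:\B^n\to\B^n$ with $1-|f_k(0)|\geq\delta$ and points $z_k$ that nearly realize the supremum of the scale-invariant quantity $|Df_k(x)|(1-|x|)/(1-|f_k(x)|)$ over $\B^n$, with $|Df_k(z_k)|\to\infty$. Since $|Df_k|\leq C/d$, we have $d_k:=1-|z_k|\to 0$. Standard $K$-quasiconformal H\"older estimates give equicontinuity of $\{f_k\}$ on $\overline{\B^n}$; after extracting a subsequence, $f_k\to f_*$ uniformly on $\overline{\B^n}$ with $f_*$ itself $K$-quasiconformal, whence $z_k\to z_*\in\Ss^{n-1}$, $f_k(z_k)\to f_*(z_*)\in\Ss^{n-1}$ and $\sigma_k:=1-|f_k(z_k)|\to 0$. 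Consider the rescalings
\[
g_k(w) = \frac{f_k(z_k+d_k w)-f_k(z_k)}{\sigma_k},\qquad w\in\B^n.
\]
Each $g_k$ is harmonic, $K$-quasiconformal, vanishes at $0$ and is bounded by $(\star)$; its target set $(\B^n-f_k(z_k))/\sigma_k$ converges to a closed half-space $H\subset\R^n$ with $0\in\partial H$. A subsequential limit $g:\B^n\to\overline H$ is then harmonic with $g(0)=0\in\partial H$. The maximum principle applied to the coordinate of $g$ along the inward unit normal of $H$ forces $g(\B^n)\subset\partial H$; but then $J_g\equiv0$, and the quasiconformal distortion inequality $|Dg|^n\leq KJ_g$ gives $Dg\equiv 0$, so $g$ is constant. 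By harmonic regularity $g_k\to g$ in $C^\infty_{\mathrm{loc}}$, hence $|Dg_k(0)|\to 0$; this contradicts the positive lower bound on $|Dg_k(0)|=d_k|Df_k(z_k)|/\sigma_k$ supplied by the near-maximizer choice of $z_k$.

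\textbf{The hard part.} The delicate step is the choice of $z_k$. The inequality $(\star)$ only yields $d_k|Df_k(z_k)|/\sigma_k\leq C$, with no matching lower bound a priori; to produce a nondegenerate blow-up $g$ one must select $z_k$ so that the scale-invariant ratio $|Df_k(x)|(1-|x|)/(1-|f_k(x)|)$ realizes (up to a fixed factor) its supremum over $\B^n$, and must verify that this supremum does not collapse to $0$ along the extracted subsequence. Once this is in hand, the maximum-principle plus quasiconformal-collapse step above delivers the contradiction and the theorem follows with $L=L(K,n,\mathrm{dist}(f(0),\Ss^{n-1}))$.
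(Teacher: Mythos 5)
Your estimate $(\star)$ is correct and is a genuine reduction: it shows the Lipschitz bound would follow from the linear decay $1-|f(x)|\leq C(1-|x|)$. But the blow-up argument meant to supply this does not close, and the obstruction is structural, not just the technical point you flag. The quantity $Q(x)=|Df(x)|(1-|x|)/(1-|f(x)|)$ is \emph{already} bounded above by $(\star)$, so a contradiction obtained at a near-maximizer of $Q$ could at best reprove that bound; and bounding $Q$ does not bound $|Df|$, because for a general $K$-quasiconformal self-map of the ball one only has the H\"older estimate $1-|f(x)|\leq C(1-|x|)^{\alpha}$, $\alpha=K^{1/(1-n)}<1$, so the factor $(1-|f(x)|)/(1-|x|)$ may blow up at the boundary. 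Conversely, the contradiction hypothesis actually hands you points where $|Df_k|\to\infty$ (which is what forces $d_k\to 0$ and the half-space limit), but at those points there is no lower bound on $Q_k(z_k)=d_k|Df_k(z_k)|/\sigma_k$, so the blow-up limit being constant produces no contradiction. Your two requirements on $z_k$ --- near-maximizing $Q_k$ and carrying a blowing-up derivative --- need not be simultaneously realizable, and each alone is insufficient. The ``hard part'' you identify is therefore not a verification left to the reader; it is essentially the entire content of the theorem, and the (in itself correct) machinery of half-space limits, the maximum principle and quasiconformal collapse gives no purchase on it.

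The paper's proof avoids this circle altogether. It sets $w(x)=1-|f(x)|^2$, which vanishes on $\Ss^{n-1}$ and satisfies $\Delta w=-2\|Df\|^2$, hence the differential inequality $|\Delta w|\leq a|\nabla w|^2+b$ after the normalization \eqref{sharp} and the Bloch-type bound \eqref{bloch}. The decisive external input is Gehring's higher integrability theorem, giving $\nabla w\in L^{p_0}(\B^n)$ for some $p_0>n$, which is then bootstrapped through the Sobolev embedding of Lemma \ref{sobo} (Corollary \ref{ite}) to $\nabla w\in L^{\infty}$; the Lipschitz bound for $f$ then follows from \eqref{kolme}. If you wish to retain a compactness flavour, the ingredient your scheme is missing is precisely some such higher-integrability or reverse-H\"older control that rules out the degenerate blow-up; without it the argument cannot be completed.
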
 

For the proof of Theorem \ref{3.1} we only need the Sobolev embedding, which we use in the following local form.

\begin{lem} \label{sobo}
Suppose that $w \in W^{2,1}_{loc}(\B^n) \cap C( \overline{\B^n}\, )$, that $h \in L^p(\B^n)$ for some $1 < p < \infty$ and that
$$ \Delta w = h \; \mbox{ in }  \B^n, \mbox{ with } w\big|_{ \Ss^{n-1}} = 0,
$$

a) If $1 < p < n$, then 
$$ \| \nabla w \|_{L^q(\B^n)} \leq c(p,n) \| h \|_{L^p(\B^n)}, \qquad \frac{1}{q} = \frac{1}{p} - \frac{1}{n}. 
$$

b) If $n < p < \infty$, then
$$  \| \nabla w \|_{L^{\infty}(\B^n)}  \leq c(p,n) \| h  \|_{L^p(\B^n)}. 
$$
%b) If $n < p < \infty$, then
%$$  \| \nabla w \|_{L^{\infty}(\B^n)} + \| \nabla w \|_{Lip_{\alpha}} \leq c(p,n) \| h  \|_{L^p(\B^n)},
% \qquad  \alpha = 1 - \frac{n}{p}.
%$$
\end{lem}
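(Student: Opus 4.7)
The plan is to represent $w$ via the Dirichlet Green's function of the ball and then reduce both estimates to standard mapping properties of the Riesz potential. Set
$$ \tilde w(x) = -\int_{\B^n} G(x,y)\, h(y)\,dy, $$
where $G$ is the Green's function of $\B^n$. Classical potential theory shows that $\tilde w \in C(\overline{\B^n})$, vanishes on $\Ss^{n-1}$, and distributionally satisfies $\Delta \tilde w = h$. The difference $w - \tilde w$ is then a distributional harmonic function on $\B^n$, hence smooth and harmonic in the classical sense; being continuous up to the boundary with zero boundary values, it must vanish by the maximum principle. So we may assume $w = \tilde w$.

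Next I would invoke the standard pointwise bound
$$ |\nabla_x G(x,y)| \leq C(n)\, |x-y|^{1-n}, \qquad x,y \in \B^n,\ x \neq y, $$
which follows by direct computation from the explicit formula for the Green's function of the ball. Differentiating under the integral (the resulting kernel is locally integrable in $y$) yields
$$ |\nabla w(x)| \leq C(n)\int_{\B^n} |x-y|^{1-n}\,|h(y)|\,dy, $$
which is, up to a constant, the Riesz potential $I_1$ applied to $|h|\cdot\chi_{\B^n}$.

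Both parts now follow from standard estimates for $I_1$. For (a), the Hardy--Littlewood--Sobolev inequality gives $\|I_1(|h|)\|_{L^q(\R^n)} \leq c(p,n)\,\|h\|_{L^p(\R^n)}$ whenever $1 < p < n$ and $1/q = 1/p - 1/n$, which is precisely the claimed estimate. For (b), H\"older's inequality yields
$$ |\nabla w(x)| \leq C\,\|h\|_{L^p(\B^n)}\left(\int_{\B^n}|x-y|^{(1-n)p'}\,dy\right)^{1/p'}, $$
and the inner integral is finite and bounded uniformly in $x \in \B^n$ precisely because $p > n$ forces $(n-1)p' < n$. The only non-routine point is the identification $w = \tilde w$, which depends on the boundary continuity hypothesis together with the maximum principle; everything else is a direct transcription of well-known Riesz potential estimates.
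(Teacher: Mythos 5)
Your proposal is correct and follows essentially the same route as the paper: representation of $w$ by the Green's function of the ball, the pointwise bound $|\nabla_x G(x,y)| \leq C(n)|x-y|^{1-n}$, reduction to the Riesz potential $\mathcal I_1$, Hardy--Littlewood--Sobolev for part (a), and H\"older for part (b). Your explicit justification that $w$ coincides with the Green potential (via the maximum principle and the boundary continuity hypothesis) is a point the paper leaves implicit, but otherwise the arguments are the same.
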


The standard proof of Lemma \ref{sobo} %, see e.g. \cite{?}, \cite{?}, 
follows from the fact that one can represent $w$ in terms of the Green's function
$G_{\B^n}(x,y)$ of the unit ball,
$$w(x) = \int_{\B^n} G_{\B^n}(x,y) h(y) dm(y), \qquad x \in \B^n.
$$
The Green's function and its gradient
$$ \nabla_x G_{\B^n}(x,y) =  c_1(n)\left[ \frac{x-y}{|x-y|^n} + |y|^n \frac{y - |y|^2 x}{\big| y- |y|^2 x  \big|^n}\right]
$$
can be explicitly calculated. Since $|y| |x-y| \leq |y - |y|^2x|$ for all $x,y \in \B^n$, the gradient is bounded by  $$| \nabla_x G_{\B^n}(x,y)| \leq 2 c_1(n) |y-x|^{1-n}\quad  \mbox{for} \quad x,y \in \B^n.$$
Therefore $ \| \nabla w \|_{L^q(\B^n)} \leq c  \| {\mathcal I}_1 h \|_{L^q(\R^n)}$, where ${\mathcal I}_s h$ denotes the Riesz potential of order $s$.  Thus   Lemma \ref{sobo}.$a)$ reduces to the well known boundedness properties of the Riesz potentials,
$$ \| {\mathcal I}_s h \|_{L^q(\R^n)} \leq c(s,p,q) \|  h \|_{L^p(\R^n)}, \qquad \frac{1}{q} = \frac{1}{p} - \frac{s}{n},$$
 given  e.g. in \cite[p.]{stein}. % for the bound $a)$.
  The bound in  $ b)$ is easier and follows from H\"older's inequality, since $y \mapsto |x-y|^{1-n} \in L^q(\B^n)$ for every $1 \leq q < \frac{n}{n-1}$.
\medskip

The above local form of Sobolev's embedding yields a quick proof for the following. 
%generalisation of \cite[Theorem B]{k4}.
\smallskip

\begin{cor} \label{ite}
Suppose $w \in W^{2,1}_{loc}(\B^n) \cap C( \overline{\B^n}\, )$, $n \geq 2$, is such that 
\begin{equation} \label{let}
w\big|_{ \Ss^{n-1}} = 0, \quad \mbox{with} \quad 
\int_{\B^n} |\nabla w|^{p_0} \; dm  < \infty \; \; \mbox{for some }  n < p_0 < \infty.
\end{equation}
If $w$ satisfies the following uniform differential inequality,
\begin{equation} \label{bound}
 |\Delta w (x) | \leq a |\nabla w(x) |^2 + b, \qquad x \in \B^n,
\end{equation}
for some constants $\, a, b < \infty$, we then have
\begin{equation} \label{claim}
 \| \nabla w \|_{L^\infty(\B^n)}  \leq M < \infty,  %\qquad 0 < \alpha < 1, \| w\big|_{\Ss^{n-1}} \|_{\infty} +
\end{equation}
where $M = M(a,b, p_0,  \|Ê\nabla w \|_{p_0})$. 
%\begin{equation} \label{claim}
 %\| \nabla w \|_{L^\infty(\B^n)} + \|w \|_{C^{1,\alpha}(\B^n)} \leq M < \infty, \qquad 0 < \alpha < 1, 
%\end{equation}
%where $M = M(a,b, p, \| w \|_{\infty} + \|Ê\nabla w \|_p, \alpha)$. 
In particular, $w$ is Lipschitz continuous.
\end{cor}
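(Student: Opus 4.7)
The plan is to bootstrap the Sobolev regularity using the differential inequality \eqref{bound}. Writing $h = \Delta w$, the inequality gives
$$
\|h\|_{L^{p_0/2}(\B^n)} \leq a\, \bigl\| |\nabla w|^2 \bigr\|_{L^{p_0/2}(\B^n)} + b\,|\B^n|^{2/p_0} = a\,\|\nabla w\|_{L^{p_0}(\B^n)}^2 + b\,|\B^n|^{2/p_0},
$$
so the hypothesis \eqref{let} automatically upgrades $\Delta w$ to an $L^{p_0/2}$ function. The key observation is that the Sobolev gain in Lemma \ref{sobo}.a turns integrability of $\Delta w$ on the level $p_0/2$ into integrability of $\nabla w$ at exponent $q$ determined by $1/q = 2/p_0 - 1/n$; and because $p_0 > n$, we have $1/q < 1/p_0$, so $q > p_0$. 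Thus each application of the lemma strictly improves the integrability of $\nabla w$.

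Concretely, I would define a sequence $p_{k+1}$ by $1/p_{k+1} = 2/p_k - 1/n$ as long as $p_k < 2n$. Setting $\beta_k = 1/p_k - 1/n$, this is just $\beta_{k+1} = 2\beta_k$, and since $\beta_0 < 0$ (because $p_0 > n$), the sequence $\beta_k$ doubles in absolute value at each step. Hence after finitely many steps $K$ one reaches $p_K > 2n$, i.e.\ $p_K/2 > n$. At each intermediate step, writing $N_k := \|\nabla w\|_{L^{p_k}(\B^n)}$, Lemma \ref{sobo}.a yields
$$
N_{k+1} \leq c(p_k/2,n)\bigl( a\,N_k^2 + b\,|\B^n|^{2/p_k}\bigr),
$$
so each $N_k$ is finite and controlled by $a, b, n, p_0$ and $N_0 = \|\nabla w\|_{p_0}$.

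Once we have reached $p_K$ with $p_K/2 > n$, the differential inequality gives $\Delta w \in L^{p_K/2}(\B^n)$, and Lemma \ref{sobo}.b yields
$$
\|\nabla w\|_{L^\infty(\B^n)} \leq c(p_K/2,n)\bigl(a\,N_K^2 + b\,|\B^n|^{2/p_K}\bigr) =: M,
$$
where $M$ depends only on $a, b, n, p_0$ and $\|\nabla w\|_{p_0}$. Throughout the iteration the hypotheses of Lemma \ref{sobo} (namely $w \in W^{2,1}_{loc} \cap C(\overline{\B^n})$ with zero boundary values) are unchanged, so there is no regularity issue at the successive applications.

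The Lipschitz conclusion is then immediate: a function in $W^{1,\infty}(\B^n)\cap C(\overline{\B^n})$ on the convex domain $\B^n$ is Lipschitz with constant $M$, by integrating $\nabla w$ along line segments. The only real obstacle is verifying that the exponent recursion terminates in finitely many steps and that the constants from Lemma \ref{sobo}.a at each step stay bounded — but since the iteration takes a fixed, finite number of steps $K = K(n, p_0)$, this is automatic.
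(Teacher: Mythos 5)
Your proof is correct and follows essentially the same bootstrap as the paper: feed $\Delta w\in L^{p_k/2}(\B^n)$ into Lemma \ref{sobo}.a, record the exponent gain (your substitution $\beta_k=1/p_k-1/n$ with $\beta_{k+1}=2\beta_k$ is exactly the doubling the paper writes as $p_\ell> n(1+2^\ell\varepsilon)$), and finish with Lemma \ref{sobo}.b once $p_K>2n$. The only detail the paper adds, and which you gloss over with ``one reaches $p_K>2n$,'' is the borderline case where some $p_k$ equals $2n$ exactly (so that neither part of the lemma applies); this is handled by perturbing $p_0$ slightly downward.
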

\smallskip

\begin{proof} 

%Note first that we may assume that  $w$ vanishes on the boundary. Namely, if  $\phi = w\big|_{ \Ss^{n-1}}$, let $w_0$ %solve  the  Dirichlet problem in $\B^n$ with these  (continuous by assumption) boundary values.   
%From gradient bounds for the Poisson kernel one gets \cite[Theorem ?.?]{?}

According to  \eqref{bound} we have 
\begin{equation} \label{bound3}
\Delta w(x) = h(x), \qquad  \mbox{for  a.e. } x \in \B^n,
\end{equation}
 where
\begin{equation} \label{bound2}
h(x) = c(x) \left(  |\nabla w (x) |^2 + 1\right) 
\end{equation}
and  $ \|c \|_{\infty} \leq \max\{a,b\}$; one can simply define 
$c(x) :=  \Delta w (x) \left(  |\nabla w (x) |^2 + 1\right)^{-1}$ for almost every $x \in \B^n$.

Here by our assumptions  $\nabla w \in L^{p_0}(\B^n)$. However, with Sobolev embedding one can improve this integrability, up to 
\begin{equation}
\label{raja}
\nabla w \in L^{s}(\B^n) \qquad \mbox{where } s > 2n.
\end{equation}  
Indeed, if $p_0/2 < n < p_0$, then %by \eqref{bound2}, 
$h(x) = c(x) \left(  |\nabla w (x) |^2 + 1\right)  \in L^{p_0/2}(\B^n)$, and \eqref{bound3} with Lemma \ref{sobo} $a)$ give
\begin{equation}
\label{ }
\nabla w \in L^{p_1}(\B^n),  \qquad   p_1 = \frac{p_0 n}{2n-p_0} > p_0,
\end{equation} which is
a strict improvement in the integrability. 

To quantify this, note that if initially $p_0 = n(1+\varepsilon)$, $\varepsilon > 0$, then
$$ p_1 =  \frac{p_0 n}{2n-p_0} = n  \frac{1+\varepsilon}{1-\varepsilon} > n(1+2\varepsilon)
$$
Thus one can iterate this feedback argument, getting $\nabla w \in L^{p_\ell}(\B^n)$, $\ell = 0,1,2, \dots$ with
$p_{\ell} > n(1+2^{\ell} \varepsilon)$, until the condition \eqref{raja} is reached. (If it happens that for some exponent $p_\ell = 2 n$, we can choose $p_0$ little smaller so that this degeneracy does not happen.)

And once \eqref{raja} is achieved, \eqref{bound3}-\eqref{bound2}  with Sobolev embedding, Lemma \ref{sobo} b), give $\| \nabla w \|_\infty < \infty$.
The proof also gives a bound for $ \| \nabla w \|_\infty$ that depends only on the constants $a$ and $b$, the exponent $p_0$ and the initial norm $ \| h \|_{p_0/2} \leq \max\{a,b\}(\|Ê\nabla w \|_{p_0}^2 +1)$.
\end{proof}

\begin{remark} It is interesting to note that the above iteration argument fails if in  \eqref{let} one assumes integrability only for some $1 \leq p_0 < n$. Thus higher integrability, and Gehring's theorem \cite{geh} in case of quasiconformal mappings, become particularly useful also here.\end{remark}

\begin{remark} One can replace the zero boundary values in \eqref{let} e.g. by the requirement $w\big|_{\Ss^{n-1}} \in C^{1,\alpha}$, by considering $w - P[w]$, where $P[w]$ is the Poisson integral of $w$. Similarly, by properties of the Green's function the conclusions can be improved to $ \| \nabla w \|_{L^\infty(\B^n)} + \|w \|_{C^{1,\alpha}(\B^n)} \leq M < \infty, \qquad 0 < \alpha < 1 $% [Check !!!!!!!!!!!!!!!!!!!!!!!!!!]
\end{remark}

We can now turn to proving the Lipschitz bounds for  harmonic $K$-quasiconformal mappings $f = (f^1,\dots,f^n): \B^n \to \B^n$. For this note that by harmonicity
$$ \Delta (f^j)^2(x) = 2 |\nabla f^j(x)|^2, \qquad j=1,\dots,n.
$$
Thus  any "reasonable" function of $f^1, \dots, f^n $ will satisfy the differential inequality \eqref{bound}. To get uniform  Lipschitz bounds, we  need  in addition some normalisation such as vanishing on the boundary $ \Ss^{n-1}$, like in  \eqref{let}. Therefore a convenient choice for our purposes is  e.g. $w(x) = 1-|f(x)|^2$.

\medskip

\noindent {\it Proof of Theorem \ref{3.1}}. We first recall  Gehrings famous  theorem \cite{geh} which gives for every quasiconformal mapping  $f: \R^n \to \R^n$ the higher integrability
\begin{equation}\label{geh2}
 \int_{\B^n} |Df(x)|^p dm \leq C < \infty, \qquad p = p(n,K) > n,% \bigl( { \rm dist}(f(0),\Ss^{n-1}), K,p \bigr).  %\qquad [check \; normalization] 
\end{equation}
where for mappings of the whole space $\R^n$, the constant $C$ depends only on $n$ and distortion $K(f)$.

In case $f:\B^n \to \B^n$ is $K$-quasiconformal, we can compose $f$ with a M\"obius transform $\psi$ preserving the ball, such that $f \circ \psi (0) = 0$. With  Schwarz reflection one can then extend $f \circ \psi$ to $\R^n$ and apply \eqref{geh2} to this mapping. Unwinding the M\"obius transform, i.e. after a change of variables, we see that any $K$-quasiconformal mapping $f:\B^n \to \B^n$ satisfies \eqref{geh2} with $C = C(n,K, { \rm dist}(f^{-1}(0),\Ss^{n-1}) \bigr)$

If in addition $f$ is harmonic, consider the function
$$ w(x) = 1-|f(x)|^2, \qquad x \in \B^n.
$$
Since quasiconformal mappings of $\B^n$ extend continuously  to the boundary, $w(x)$ satisfies the assumptions of Corollary \ref{ite}. For the condition \eqref{bound} note that $w = u \circ f$ where
$$ u(x) = 1- |x|^2 \quad \mbox{with} \quad \nabla u(x) = - 2 x, \qquad x \in \B^n.
$$
Thus $\nabla w(x) = Df^t(x) \nabla u(f(x))$ so that 
\begin{equation}\label{kolme}
 \frac{2}{K} |f(x)| \,|Df(x)| \leq |\nabla w(x)| \leq  2|f(x)|\, |Df(x)|
\end{equation}
with
$$ |\Delta w(x)| =  2||Df(x)||^2 \leq 2n^2 |Df(x)|^2, \qquad x \in \B^n.
$$
where $||Df(x)||^2$ denotes the Hilbert-Schmidt norm of the differential matrix.

The above already establishes \eqref{bound}. However, to see the explicit dependence of $a$ and $b$ on properties of the mapping $f$ we first note that there is a constant $\delta = \delta\bigl(n,K,a, { \rm dist}(f(0),\Ss^{n-1})\bigr)$ such that
\begin{equation}
\label{sharp}
1-|x| + |f(x)| \geq \delta > 0, \qquad \mbox{for all } x \in \B^n.
\end{equation}
 Indeed, as quasiconformal mappings  of $\B^n$Êare rough isometries in the hyperbolic metric \cite{vuo},  if $M := h_{\B^n}\bigl(0,f(0)\bigr) $, then either $ h_{\B^n}\bigl(f(x),f(0)\bigr) \geq 2M \Rightarrow |f(x)| \geq \frac{e^M - 1}{e^M +1}$, or else $ h_{\B^n}(x,0) \leq c(K)(2M+1) \Rightarrow 1-|x| \geq e^{-c(K)(2M+1)}$. Thus \eqref{sharp} holds, and  we have
 %$\delta := \min\{ \delta_1,\delta_2 \}$ satisfies \eqref{sharp}, and  we have
 $$ |\Delta w| \leq \frac{4n^2}{\delta^2} [ (1-|x|)^2   + |f(x)|^2 ]  |Df(x)|^2 \leq \frac{2Kn^2}{\delta^2} |\nabla w|^2 + \frac{4n^2}{\delta^2} (1-|x|)^2 |Df(x)|^2.$$
The last term is controlled by basic ellipticity bounds \cite[p.38]{gt}, i.e. the Bloch norm bounds
\begin{equation}\label{bloch}
 (1-|x|)|Df(x)|  \leq c(n) \|f\|_\infty
\end{equation}
valid for every harmonic function. Thus \eqref{bound} holds with $a = 4n^2 \delta^{-2}$, $b =  4n^2 c(n)^2\delta^{-2}$, 
 so that $\nabla w \in L^\infty(\B^n)$ by Corollary \ref{ite}. A combination of \eqref{kolme}-\eqref{bloch} shows finally that $f$ is a Lipschitz mapping, with Lipschitz constant $L\leq (c(n) +K \|Ê\nabla w\|_{\infty}/2)/\delta$.
\hfill $\Box$

\section{Co-Lipschitz Mappings}

We say that a mapping $f$ defined in a domain $\Omega \subset \R^n$ has the co-Lipschitz property with constant $1 \leq L$, if 
\begin{equation} \label{colip}
 |f(x) - f(y)| \geq \frac{1}{L}|x-y| \qquad \forall \;  x, y \in \Omega.
\end{equation} 
The inverse of a $K$-quasiconformal mapping is also $K$-quasiconformal mapping, but   for harmonic $f$ the inverse $f^{-1}$ is not in general harmonic. Hence even for harmonic quasiconformal mappings of the ball, the
 co-Lipschitz property does not follow from Theorem \ref{3.1}. 
 
Naturally, for mappings in \eqref{colip} the Jacobians are non-vanishing everywhere. In dimensions $n \geq 3$, the Jacobian of a  harmonic homeomorphism may vanish,  see e.g. \cite[p.26]{duren}, and therefore the co-Lipschitz property is a  more subtle problem than in dimension $n=2$. 

On the other hand, for quasiconformal mapping we have the following geometric notion of an average derivative,  see
 \cite[Definition 1.5]{ag},
\begin{equation} \label{ave}
\alpha_f(z)=\exp\left(\frac 1n(\log{J_f})_{B_z}\right).
\end{equation} 
Here
$$
(\log J_f)_{B_z}=\frac 1{m(B_z)}\int_{B_z}\log J_f\,dm,\quad B_z=B(z,d(z,\partial \Omega)).
$$
Since for a quasiconformal mapping, the Jacobian $J_f$ is an $A_\infty$-weight, $\alpha_f(z)$ is comparable to
$\left( \frac 1{m(B_z)}\int_{B_z} J_f^p \right)^{1/p}$ for every $0 < p \leq 1$, and hence we could have used such averages, as well. On the other hand, in the case $n=2$ and $f$ conformal we have
$$
\alpha_f(z)= |f'(z)|
$$
and therefore the above choice \eqref{ave} appears a natural  one. Furthermore, we have the following quasiconformal version of the Koebe-distortion theorem, see \cite[Theorem 1.8]{ag}.

%Continuity properties of quasiconformal mappings $f:D\longrightarrow D'$, where $D$ and $D'$ are domains in plane,
%with respect to various natural metrics have been studied extensively in \cite{akm}, \cite{km}, \cite{kp} and \cite{pa3}.

%We are going to use the following result
\begin{thm}
\label{agt}
 Suppose that $\Omega$ and $\Omega'$ are domains in $\mathbb R^n$ if $f:\Omega\longrightarrow \Omega'$ is
$K$-qc, then
$$
\frac 1c\,\frac{d(f(z),\partial \Omega')}{d(z,\partial \Omega)}\leq\alpha_f(z)\leq c\,\frac{d(f(z),\partial \Omega')}{d(z,\partial \Omega)}
$$
for $z\in \Omega$, where $c$ is a constant which depends only on $K$ and $n$.
\end{thm}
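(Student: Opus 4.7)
The plan is to pass through the arithmetic mean $|f(B_z)|/|B_z|$ of $J_f$ on $B_z$ (equal to $\frac{1}{|B_z|}\int_{B_z} J_f\,dm$ by change of variables) and then to compare this with $d(f(z),\partial\Omega')^n$. Two ingredients provide the two bridges.

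The first bridge is an arithmetic-versus-geometric mean comparison for $J_f$. Jensen's inequality applied to the concave function $\log$ yields
$$\alpha_f(z)^n = \exp\bigl((\log J_f)_{B_z}\bigr) \leq \frac{1}{|B_z|}\int_{B_z} J_f\,dm = \frac{|f(B_z)|}{|B_z|},$$
and the reverse comparison is Gehring's theorem that for $K$-quasiconformal $f$ the Jacobian $J_f$ is an $A_\infty$-weight with constants depending only on $n$ and $K$, so that the arithmetic mean is controlled by a constant multiple of the geometric mean. Together these give $\alpha_f(z)^n \asymp |f(B_z)|/|B_z|$ with constants depending only on $n$ and $K$. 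Since $|B_z| = c_n\,d(z,\partial\Omega)^n$ by the definition of $B_z$, the theorem reduces to establishing the distortion estimate $|f(B_z)| \asymp d(f(z),\partial\Omega')^n$.

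The second bridge, which I expect to be the main obstacle, is precisely this geometric distortion estimate. The upper bound $\mathrm{diam}\,f(B_z) \leq C(K,n)\,d(f(z),\partial\Omega')$ expresses the inability of a $K$-quasiconformal map to stretch $B_z$ far beyond the distance from $f(z)$ to $\partial\Omega'$; one establishes it via the quasi-invariance of the conformal modulus of a ring separating $B_z$ from $\partial\Omega$, and then reads the modulus bound in the image against the Teichm\"uller or Gr\"otzsch extremal ring to recover Euclidean information. The matching lower bound $f(B_z) \supset B\bigl(f(z), c(K,n)\,d(f(z),\partial\Omega')\bigr)$ is obtained by applying the same argument to the inverse $f^{-1}:\Omega'\to\Omega$, which is itself $K$-quasiconformal. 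Combining the two bridges and taking $n$-th roots yields the desired two-sided comparison $\alpha_f(z) \asymp d(f(z),\partial\Omega')/d(z,\partial\Omega)$.
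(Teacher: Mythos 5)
First, a point of reference: the paper does not prove this statement at all --- Theorem \ref{agt} is imported verbatim from Astala--Gehring \cite[Theorem 1.8]{ag} --- so your attempt can only be measured against the argument in that reference. That argument does follow your two-bridge strategy in outline, but with one essential modification that your write-up is missing, and without it both of your bridges break.

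The gap is that you apply both bridges to the full Whitney ball $B_z=B(z,d(z,\partial\Omega))$, which touches $\partial\Omega$, and there they are false. Gehring's reverse H\"older inequality, and hence the $A_\infty$ comparison of arithmetic and geometric means of $J_f$, is an \emph{interior} estimate: it holds on balls $B$ with, say, $2B\subset\Omega$, and can fail on boundary-touching balls. Concretely, take $f(w)=\tfrac12\log\frac{1+w}{1-w}$, a conformal (hence $1$-qc) map of $\D$ onto an infinite strip, and $z=0$, so that $B_z=\D$. Then $\log J_f=-2\log|1-w^2|$ is harmonic in $\D$, so $(\log J_f)_{\D}=0$ and $\alpha_f(0)=1$, consistent with the theorem; but
$$\frac{1}{|\D|}\int_{\D}J_f\,dm=\frac{|f(\D)|}{|\D|}=+\infty,$$
so your claimed comparison $\alpha_f(z)^n\asymp |f(B_z)|/|B_z|$ fails, and with it the reduction to $|f(B_z)|\asymp d(f(z),\partial\Omega')^n$ (which is also false here, as is $\operatorname{diam}f(B_z)\le C\,d(f(z),\partial\Omega')$, for the same reason). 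The repair, which is how \cite{ag} actually argues, is to run both bridges on an interior ball such as $\tfrac12 B_z$ --- where the reverse H\"older/$A_\infty$ estimate and the egg-yolk distortion estimates $f(\tfrac12 B_z)\subset B\bigl(f(z),C\,d(f(z),\partial\Omega')\bigr)$ and $f(\tfrac12 B_z)\supset B\bigl(f(z),c\,d(f(z),\partial\Omega')\bigr)$ are all valid with constants depending only on $n$ and $K$ --- and then to pass from $(\log J_f)_{\frac12 B_z}$ back to $(\log J_f)_{B_z}$ using Reimann's theorem that $\|\log J_f\|_{BMO(\Omega)}\le c(n,K)$, which bounds $|(\log J_f)_{B_z}-(\log J_f)_{\frac12 B_z}|$ by a constant depending only on $n$ and $K$. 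With that interpolation inserted, your argument becomes the standard proof.
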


As a  step towards the co-Lipschitz properties of harmonic quasiconformal mappings we  prove the following general lower bound.

\begin{thm}
\label{avebdd}
Suppose $f:\B^n\longrightarrow \Omega$ is a harmonic quasiconformal mapping, with $\Omega \subset \R^n$ a convex subdomain. Then 
\begin{equation}
\label{ }
\alpha_f(x) \geq c_0 \, d(f(0),\partial \Omega)> 0, \qquad x \in \B^n,
\end{equation}
where the constant $c_0 = c_0(n, K) $ depends only on the dimension $n$ and distortion $K = K(f)$.
\end{thm}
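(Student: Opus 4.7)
The plan is to use Theorem \ref{agt} to convert the claimed lower bound on $\alpha_f$ into a boundary-distance comparison. Since that theorem gives $\alpha_f(x) \geq c^{-1} d(f(x),\partial\Omega)/(1-|x|)$ with a constant depending only on $n$ and $K$, it suffices to establish the Harnack-type inequality
\begin{equation*}
 d(f(x),\partial\Omega) \;\geq\; c(n)\, (1-|x|)\, d(f(0),\partial\Omega), \qquad x \in \B^n.
\end{equation*}

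To prove this, I would linearise the distance function by exploiting convexity. Fix $x \in \B^n$, choose a nearest boundary point $y_0 \in \partial\Omega$ to $f(x)$, and set $e = (f(x)-y_0)/|f(x)-y_0|$. Convexity guarantees that $H := \{y : \langle y - y_0, e\rangle = 0\}$ is a supporting hyperplane of $\Omega$ at $y_0$, so $\Omega$ lies in the open half-space $\{\langle y-y_0,e\rangle > 0\}$. Consequently the scalar function
\begin{equation*}
u(z) \;:=\; \langle f(z) - y_0,\, e\rangle, \qquad z \in \B^n,
\end{equation*}
is a positive harmonic function on $\B^n$, being a linear combination of the (harmonic) coordinate functions of $f$.

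The next ingredient is the classical Poisson-kernel lower bound on the ball: every positive harmonic function on $\B^n$ satisfies
\begin{equation*}
u(x) \;\geq\; c(n)\, (1-|x|)\, u(0),
\end{equation*}
obtained from the pointwise estimate $P(x,\xi) = c_n (1-|x|^2)/|x-\xi|^n \geq c_n'(1-|x|)$ inserted into the mean value representation at the origin. By construction $u(x) = \langle f(x)-y_0,e\rangle = d(f(x),\partial\Omega)$, while $u(0)$ is the Euclidean distance from $f(0)$ to the hyperplane $H$. Since the segment from $f(0) \in \Omega$ in direction $-e$ starts in $\Omega$ and ends on $H$ (which is disjoint from $\Omega$), it must cross $\partial\Omega$, yielding the elementary geometric inequality $u(0) \geq d(f(0),\partial\Omega)$. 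Chaining these bounds gives the displayed estimate, and Theorem \ref{agt} converts it into the desired bound on $\alpha_f$ with $c_0=c_0(n,K)$.

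The argument is short and the only subtle point is getting the geometry right in the last step: one needs the correct direction of the comparison $u(0) = d(f(0),H) \geq d(f(0),\partial\Omega)$, which uses that $\Omega$ sits on one side of the supporting hyperplane and that $f(0)$ is an interior point of $\Omega$. Beyond this, no PDE tools beyond Harnack/Poisson representation and the previously stated Theorem \ref{agt} enter the proof; harmonicity and convexity do all the work.
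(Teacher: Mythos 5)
Your proposal is correct and follows essentially the same route as the paper: both linearize $d(\cdot,\partial\Omega)$ via supporting hyperplanes of the convex target, apply Harnack's inequality on $\B^n$ to the resulting positive harmonic functions $z \mapsto \langle f(z)-y_0, e\rangle$, and then invoke Theorem \ref{agt}. The only cosmetic difference is that the paper applies Harnack uniformly to the whole family of supporting hyperplanes and takes an infimum at the end, whereas you fix the single minimizing hyperplane at the nearest boundary point to $f(x)$; the two are interchangeable.
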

\begin{proof}
For every $z\in \B^n$ we have
$$
d(f(z),\partial \Omega)=\inf_pd(f(z),p),
$$
where infimum is taken over all lines $p$ outside domain. Since $d(f(z),p) = \; \break <f(z),n> + \; const.$, where $n$ is a normal to $p$, the function $z \mapsto d(f(z),p)$ is positive and harmonic in $\B^n$. We denote this function by $h_p(z)$,  and for each $h_p$ apply the usual Harnack inequality in $\B^n$,
$$
h_p(z)\geq \frac{1-|z|}{(1+|z|)^{n-1}}\,h_p(0).
$$
Because $d(f(0),p)\geq d(f(0),\partial \Omega)$ we have
$$
h_p(z)\geq \frac{1-|z|}{(1+|z|)^{n-1}}\,d(f(0),\partial \Omega).
$$
Infimum of the last inequality over all $p$ gives
$$
d(f(z),\partial \Omega)\geq \frac{1-|z|}{(1+|z|)^{n-1}}\,d(f(0),\partial \Omega).
$$
Finally, as
$$
d(z,\partial \B^n)=1-|z|
$$
the last inequality we can write as
$$
\frac{d(f(z),\partial \Omega)}{d(z,\partial \B^n)}\geq \frac{d(f(0),\partial \Omega)}{(1+|z|)^{n-1}}.
$$
Using then Theorem \ref{agt} and quasiconformality of $f$ we conclude that
$$
\alpha_f(z)\geq c(n,K) d(f(0),\partial \Omega).
$$
\end{proof}

Thus one can achieve the co-Lipschitz property if the usual derivative can be estimated from below by the average derivative.
In two dimensions this can be done by the next key result of the second author, see \cite{man}.

\begin{thm} \label{vesnam}
Suppose $\Omega, \Omega' \subset \R^2$ are planar domains and $f:\Omega \to \Omega' $  a harmonic quasiconformal mapping. Then $\log J_f$ is superharmonic in $\Omega$.
\end{thm}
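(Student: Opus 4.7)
The plan is to reduce the claim to a classical superharmonicity statement via the complex decomposition of two-dimensional harmonic maps. Since superharmonicity is a local property, I may work on an arbitrary disk $D \subset \Omega$, on which any complex-valued harmonic $f$ admits the canonical representation $f = g + \bar h$ with $g,h$ holomorphic. Then $\partial f = g'$, $\bar\partial f = \overline{h'}$, and
$$
J_f \;=\; |\partial f|^2 - |\bar\partial f|^2 \;=\; |g'|^2 - |h'|^2.
$$
The $K$-quasiconformality of $f$ translates into the pointwise bound $|h'| \leq k\,|g'|$ with $k := (K-1)/(K+1) < 1$; in particular $g'$ is nowhere vanishing on $D$, and $\phi := h'/g'$ is a holomorphic map from $D$ into the closed disk of radius $k$.

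Factoring out $|g'|^2$, I rewrite
$$
\log J_f \;=\; 2\log|g'| \;+\; \log\bigl(1 - |\phi|^2\bigr).
$$
The first term is harmonic on $D$, being twice the real part of a local holomorphic branch of $\log g'$, so it suffices to prove that $\log(1-|\phi|^2)$ is superharmonic whenever $\phi$ is holomorphic with $|\phi|<1$. This is a classical calculation, essentially the one responsible for the negative curvature of the Poincar\'e metric on the disk: using $\Delta = 4\,\partial_z \partial_{\bar z}$ together with $\partial_{\bar z}\phi = 0$, a short computation yields
$$
\Delta \log\bigl(1 - |\phi|^2\bigr) \;=\; -\,\frac{4\,|\phi'|^2}{(1 - |\phi|^2)^2} \;\leq\; 0.
$$
Summing with the harmonic contribution from $2\log|g'|$ gives $\Delta \log J_f \leq 0$ on $D$, and hence on all of $\Omega$.

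No step here is genuinely hard. The points that merit care are the following. The decomposition $f = g + \bar h$ is a priori only local when $\Omega$ is multiply connected, but this is enough since superharmonicity is a local notion. The $K$-quasiconformality of $f$ enters solely to guarantee $g' \neq 0$ and $|\phi| < 1$, so that the logarithms make sense and the factorization above is valid; for a planar harmonic homeomorphism the nonvanishing of $g'$ is in any case forced by Lewy's theorem. Finally, at points where $\phi' = 0$ one gets $\Delta \log J_f = 0$, which is of course consistent with superharmonicity. The essential content is thus the classical superharmonicity of $\log(1-|\phi|^2)$ for holomorphic $\phi$ into the unit disk.
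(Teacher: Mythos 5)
Your argument is correct, and in fact the paper offers no proof of Theorem \ref{vesnam} at all---it is quoted from the reference \cite{man}, whose proof is essentially the decomposition you use: $\log J_f = 2\log|g'| + \log(1-|h'/g'|^2)$ with the first term harmonic and the second superharmonic by the standard curvature computation. You also correctly flag the one point needing care, namely that $g'$ is zero-free (via Lewy's theorem, or via the distortion inequality plus the fact that $J_f>0$ a.e. fails to rule out isolated zeros on its own), without which $2\log|g'|$ would only be subharmonic and the argument would break.
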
 

Now, we can use the superharmonicity of  $\log J_f$  %is superharmonic in $\Omega$, we have
for the harmonic quasiconformal mapping $f$ defined in the unit disk $\B^2$,
\begin{equation}
\label{super}
\log |Df(x)|^2 \geq \log J_f(z) \geq \frac 1{m(B_z)}\int_{B_z}\log J_f\,dm = \log \alpha_f(z)^2, \qquad z \in \B^2.
 \end{equation}
This estimate combined with Theorem \ref{avebdd} proves for every harmonic quasiconformal mapping from the disk onto a convex domain the lower bound
 \begin{equation}
\label{super2} 
\inf_{|h|=1}|Df(x)h| \geq |Df(x)|/K \geq \alpha_f(x)/K \geq c d(f(0),\partial \Omega)
 \end{equation}
  for some constant $c>0$. 
From this we can conclude that $f$ is co-Lipschitz. This is new proof of theorem \cite[Cor 2.7]{k5}. In fact we have more generally, 
\begin{cor} \label{cor2} Suppose $\Omega, \Omega' \subset \R^2$ are simply connected domains and $f:\Omega \to \Omega' $  is a harmonic quasiconformal mapping. If $\Omega'$ is convex and the Riemann map of $\Omega$ has derivative bounded from above, the $f$ has the coLipschitz property \eqref{colip}.
 \end{cor}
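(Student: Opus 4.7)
The plan is to conformally pull the problem back to the disk, where the co-Lipschitz conclusion \eqref{super2} already applies, and then transport the resulting inequality back to $\Omega$ using the hypothesis on the Riemann map.

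First, let $\varphi : \D \to \Omega$ be a Riemann map, and set $g := f \circ \varphi : \D \to \Omega'$. Because $\varphi$ is holomorphic, the components of $g$ remain harmonic (harmonicity in two dimensions is preserved under holomorphic precomposition), and $g$ is $K$-quasiconformal as the composition of a $K$-quasiconformal map with a conformal one. Thus $g$ is itself a harmonic quasiconformal mapping of the disk onto the convex domain $\Omega'$, so the discussion culminating in \eqref{super2} applies to $g$ and yields a positive constant $c_1$ (depending on $K$ and $d(g(0),\partial \Omega')$) such that
$$ |g(z_1)-g(z_2)| \;\geq\; c_1\, |z_1-z_2|, \qquad z_1, z_2 \in \D. $$

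To transfer this estimate to $\Omega$, let $x, y \in \Omega$ and set $z_1 := \varphi^{-1}(x)$, $z_2 := \varphi^{-1}(y)$; then $|f(x) - f(y)| = |g(z_1) - g(z_2)| \geq c_1\, |\varphi^{-1}(x) - \varphi^{-1}(y)|$. The hypothesis $\|\varphi'\|_\infty \leq M < \infty$ makes $\varphi$ globally $M$-Lipschitz on the convex domain $\D$, so $|\varphi^{-1}(x) - \varphi^{-1}(y)| \geq |x-y|/M$. Combining the two inequalities yields the co-Lipschitz bound \eqref{colip} for $f$ with constant $L \leq M/c_1$.

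The one step that deserves extra care is the passage from the pointwise lower bound in \eqref{super2} on $|Dg|$ to the \emph{global} co-Lipschitz inequality for $g$ on $\D$. By Lewy's theorem in the plane $g$ is a diffeomorphism, and \eqref{super2} gives $|D(g^{-1})| \leq 1/c_1$ pointwise on $\Omega'$. Since $\Omega'$ is convex, the straight segment joining $g(z_1)$ and $g(z_2)$ lies in $\Omega'$, and integrating $D(g^{-1})$ along it produces the desired bound $|z_1 - z_2| \leq |g(z_1) - g(z_2)|/c_1$. This is precisely where the convexity of $\Omega'$ enters, mirroring the role of the bound on $\varphi'$, which is needed to undo the pullback; I expect this integration-along-segments step, and the verification that $g$ inherits the harmonic-quasiconformal structure intact, to be the only points where the argument requires a bit of attention rather than being a routine composition of earlier results.
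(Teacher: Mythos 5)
Your proposal is correct and follows essentially the same route as the paper: the paper's (one-line) proof is precisely to apply \eqref{super2} to the composition of $f$ with the Riemann map of $\Omega$, use the bound on the Riemann map's derivative to undo the pullback, and use convexity of $\Omega'$ to integrate the pointwise lower bound on the differential into the global inequality \eqref{colip}. You have merely made explicit the two steps the paper leaves implicit (the integration along segments in the convex target, and the preservation of harmonicity and $K$-quasiconformality under conformal precomposition), and both are handled correctly.
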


The proof follows by applying \eqref{super2} to $f \circ g$, where $g:\D \to \Omega$ is the Riemann map. 
So in particular, in  Corollary \ref{cor2}Ê  the boundary of $\Omega$ need not be  $C^1$, not even Lipschitz. For instance, $g(z) = 2z-z^2$ is a conformal map from $\D$ onto a cardioid, with  cusp at $1 = g(1)$.

Similarly, combining \eqref{super} with Theorems \ref{3.1} and \ref{avebdd} we have a new proof for Pavlovic's theorem in $\B^2$.
\medskip

To complete the proof of Theorem \ref{main} we use an argument analogous to \eqref{super} and Theorem \ref{vesnam}. 
First, by Lewy's theorem \cite{lewy}, if the gradient $f= \nabla u$  of a (real valued) harmonic function defines a homeomorphism $f:\Omega \to \Omega' $ where $\Omega, \Omega' \subset \R^3$, then the Jacobian $J_f$ does not vanish. Further, $J_f = \mathcal H_u$, where $\mathcal H_u$ denotes the  Hessian of $u$, and here we have the  theorem of Gleason and Wolff \cite[Theorem A]{gw} that  again in dimension three, the function \, $\log \det(\mathcal H_u)$\,   is superharmonic outside the zeroes of the Hessian. We collect these facts in the following

\begin{thm}[Lewy-Gleason-Wolff] \label{lgw} Suppose % $\Omega \subset \R^3$ is a domain and
 $u:\Omega \to \R$ is a harmonic function, such that $f(x) := \nabla u(x)$ defines a homeomorphism between the domains $\Omega$ and $\Omega' \subset \R^3$.
Then 
\begin{equation}
\label{super5}
\log J_f(z) =  \log \det(\mathcal H_u) \mbox{ is superharmonic in } \Omega.
\end{equation}
\end{thm}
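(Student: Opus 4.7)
The plan is to assemble the statement directly from the two cited black-box results. First, because $f = \nabla u$, the differential of $f$ equals the Hessian $\mathcal{H}_u$, so pointwise
\[
J_f(x) \;=\; \det\bigl(Df(x)\bigr) \;=\; \det\bigl(\mathcal{H}_u(x)\bigr).
\]
This identification is the only calculation needed; everything else is invoking classical theorems.

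Next, I would apply Lewy's theorem \cite{lewy} to the hypothesis that the harmonic gradient $f = \nabla u$ is a homeomorphism onto $\Omega' \subset \R^3$. Lewy's result gives that $J_f$ has no zeros in $\Omega$. Since $\Omega$ is connected and $x \mapsto \det(\mathcal{H}_u(x))$ is continuous, the Jacobian has constant sign on $\Omega$. Replacing $u$ by $-u$ if necessary (which preserves harmonicity; since $n=3$ is odd this flips the sign of $\det(\mathcal{H}_u)$, while $-\nabla u$ is still a homeomorphism onto $-\Omega'$), we may assume $\det(\mathcal{H}_u) > 0$ throughout $\Omega$, so that $\log \det(\mathcal{H}_u)$ is a genuine real-valued continuous function on $\Omega$.

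Finally, I would quote the Gleason--Wolff theorem \cite[Theorem A]{gw}, which asserts that for any real-valued harmonic function $u$ on a domain in $\R^3$, the function $\log\lvert\det(\mathcal{H}_u)\rvert$ is superharmonic on the open set where the Hessian is nondegenerate. By the previous paragraph this exceptional set is empty in our situation, so the superharmonicity conclusion holds on all of $\Omega$, which is exactly \eqref{super5}.

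The main obstacle, if any, is simply matching hypotheses: one has to check that the Gleason--Wolff statement is applied in the correct form (it is stated for harmonic $u$ on a general domain in $\R^3$, and the sign/absolute-value ambiguity is handled by the Lewy step above), and that Lewy's theorem indeed applies to harmonic gradient homeomorphisms in dimension three. Both are standard, so beyond these checks there is no additional analytic content to supply.
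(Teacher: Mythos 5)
Your proposal is correct and takes essentially the same route as the paper, which likewise proves the theorem by combining Lewy's non-vanishing result for harmonic gradient homeomorphisms in $\R^3$ with the Gleason--Wolff theorem that $\log\det(\mathcal H_u)$ is superharmonic outside the zero set of the Hessian, the latter set being empty by the former. The only extra detail you supply --- normalizing the sign of $\det(\mathcal H_u)$ by replacing $u$ with $-u$ so that the logarithm is real-valued --- is a harmless clarification that the paper leaves implicit.
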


With these arguments %arguing as in \eqref{super} 
we finally have a proof of Theorem \ref{main}. Indeed, if $f= \nabla u$ is a quasiconformal harmonic gradient mapping in $\B^3$ then as in \eqref{super}, using Theorem \ref{lgw} we conclude that 
$$\alpha_f(x)^3 \leq J_f(x) \leq  K(f)^2 \inf_{|h|=1}|Df(x)h|^3.$$ Thus when $f(\B^3) = \B^3$, or more generally when the target domain is convex, Theorem \ref{avebdd} gives the co-Lipschitz property for $f$. The Lipschitz-properties follow from Theorem \ref{3.1}, completing the proof of Theorem \ref{main}. \hfill $\Box$

\section{Harmonic quasiconformal mappings in general domains}

The above approach to higher dimensional Pavlovic's theorem  has a few consequences also in more general subdomains of $\R^n$.
To discuss these, we start with the quasihyperbolic metric introduced by Gehring and Palka in \cite{gp}. In this work they used the metric as a tool to understand  quasiconformal homogeneity. The metric has since been studied by number of different authors. 

\begin{dfn}
Let $D$ be a proper subdomain of the $\mathbb R^n$, $n\geq 2$. We define quasihyperbolic length
of a rectifiable curve $\gamma\subset D$ by
$$
l_k(\gamma)=\int_\gamma\frac{ds}{d(x,\partial D)}.
$$
The quasihyperbolic metric is defined by
$$
k_D(x_1,x_2)=\inf_\gamma(l_k(\gamma)),
$$
where the infimum is taken over all rectifiable curves in $D$ joining $x_1$ and $x_2$.
\end{dfn}

Quasihyperbolic metric is invariant under Euclidean isometries and homoteties but it is not invariant under
conformal mappings, it is  not even M\" obius invariant. By result of Gehring and Osgood \cite{go},
for any domain $D\subseteq\mathbb R^n$ and points $x,y\in D$ there exists a quasihyperbolic geodesic.
Moreover, quasihyperbolic metric is quasi-invariant under conformal and more generally under quasiconformal mappings.
Namely, there is a constant $0 < C = C(n,K) < \infty$ such that
$$
k_{D'}(f(x_1),f(x_2))\leq C\cdot\max\{k_D(x_1,x_2),k_D^\alpha(x_1,x_2)\},\quad \alpha=K^{1/(1-n)},
$$
for all $x_1,x_2\in D$, whenever $f$ is a $K$-quasiconformal mapping from $D$ onto $D'$.

If we deal with harmonic quasiconformal mappings between two general proper planar domains,  then such mappings are bi-Lipschitz with respect to corresponding quasihyperbolic metrics \cite{man}.
Here we have a generalization of this result in space.

\begin{thm} Consider domains $\Omega, \Omega' \subset \mathbb R^3$, and let $f:\Omega \to \Omega'$ be a harmonic quasiconformal homeomorphism  which is also a gradient mapping, $f = \nabla u$ for some  function $u$ harmonic in
$\Omega$. Then $f$ is bi-Lipschitz with respect to the corresponding quasihyperbolic metrics,
$$
\frac{1}{M} k_{\Omega}(x,y) \leq k_{\Omega'}(f(x),f(y))  \leq M k_{\Omega}(x,y), \qquad x, y \in \Omega, 
$$
where the constant $M$ depends only on the distortion $K(f)$.
\end{thm}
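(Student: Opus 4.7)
The plan is to reduce the quasihyperbolic bi-Lipschitz statement to the pointwise derivative comparisons
$$\frac{1}{C}\,\frac{d(f(x),\partial\Omega')}{d(x,\partial\Omega)} \;\leq\; \inf_{|h|=1}|Df(x)h| \;\leq\; |Df(x)| \;\leq\; C\,\frac{d(f(x),\partial\Omega')}{d(x,\partial\Omega)},\qquad x\in \Omega,$$
with $C = C(K)$. Once both inequalities are in hand, substituting them into the defining integral for the quasihyperbolic length gives $l_k(f\circ\gamma) \leq C\, l_k(\gamma)$ for every rectifiable curve $\gamma$ in $\Omega$, and taking infima yields $k_{\Omega'}(f(x),f(y)) \leq C\, k_\Omega(x,y)$. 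For the reverse inequality I would argue symmetrically via $f^{-1}$: since $|Df^{-1}(f(x))| = 1/\inf_{|h|=1}|Df(x)h|$, the pointwise lower bound above converts to an upper bound for $|Df^{-1}|$, and integrating along curves in $\Omega'$ produces $k_\Omega(x,y)\leq C\, k_{\Omega'}(f(x),f(y))$. The real content is therefore the two infinitesimal comparisons.

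\textbf{Pointwise upper bound.} Fix $x\in \Omega$ and write $r = d(x,\partial\Omega)$, $r' = d(f(x),\partial\Omega')$. The Gehring-Osgood quasi-invariance of the quasihyperbolic metric, together with the standard comparison between small quasihyperbolic balls and Euclidean balls (a direct consequence of Theorem \ref{agt}), shows that $f(B(x,r/2)) \subset B(f(x), C r')$ for some $C = C(K)$. Since each component of $f$ is harmonic on $B(x,r/2)$, the classical interior gradient estimate applied to $f - f(x)$ yields
$$|Df(x)| \;\leq\; \frac{c}{r/2}\, \sup_{B(x,r/2)} |f-f(x)| \;\leq\; C'\,\frac{r'}{r}.$$

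\textbf{Pointwise lower bound.} This is where the gradient hypothesis $f=\nabla u$ becomes essential, and I would repeat pointwise the final argument in the proof of Theorem \ref{main}. Theorem \ref{lgw} says that $\log J_f$ is superharmonic throughout $\Omega$, so the submean inequality on $B_x = B(x,r)$ gives $J_f(x)\geq \alpha_f(x)^3$. The $K$-quasiconformal bound relating $J_f$ and $\inf_{|h|=1}|Df(x)h|^3$ then produces $\inf_{|h|=1}|Df(x)h| \geq c(K)\,\alpha_f(x)$, and Theorem \ref{agt} converts $\alpha_f(x)$ into $r'/r$ up to a multiplicative constant depending only on $K$.

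\textbf{Main obstacle.} The delicate point, and the reason this theorem is not a formal consequence of Theorem \ref{main}, is the absence of any convexity hypothesis on $\Omega'$. The naive idea of rescaling $f$ on $B(x,r/2)$ to a self-map $F:\B^3\to F(\B^3)\subset \B^3$ and invoking Theorem \ref{main} directly fails: the image $F(\B^3)$ is in general not convex, so Theorem \ref{avebdd} -- on which the co-Lipschitz half of Theorem \ref{main} rests -- is not available. The quasihyperbolic formulation sidesteps this difficulty, since as an infinitesimal statement it only needs the purely local Koebe estimate of Theorem \ref{agt} in place of the global Theorem \ref{avebdd}, and the convexity assumption disappears. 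Everything else -- the harmonic interior estimate, the integration along curves, and the symmetric argument for $f^{-1}$ -- is routine once the pointwise bounds are established.
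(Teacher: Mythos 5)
Your proposal is correct, and its essential content --- the pointwise lower bound $\inf_{|h|=1}|Df(x)h|\geq c(K)\,\alpha_f(x)$ obtained from the Lewy--Gleason--Wolff superharmonicity of $\log J_f$ (Theorem \ref{lgw}), converted into $d(f(x),\partial\Omega')/d(x,\partial\Omega)$ by Theorem \ref{agt}, followed by integration of the two pointwise bounds along curves as in \cite{man} --- is exactly the paper's argument. Where you genuinely diverge is the pointwise upper bound. The paper stays inside the average-derivative framework: since $Df$ is a vector-valued harmonic map, $\|Df\|^3$ is subharmonic, so $\|Df(x)\|^3$ is dominated by its mean over $B_x$; quasiconformality dominates that by the mean of $J_f$; and the $A_\infty$-property of quasiconformal Jacobians (a reverse Jensen inequality) bounds the arithmetic mean of $J_f$ by the exponential of the mean of $\log J_f$, i.e.\ by $\alpha_f(x)^3$. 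You instead bound $|Df(x)|$ by the interior gradient estimate for harmonic functions on $B(x,r/2)$ combined with the containment $f(B(x,r/2))\subset B(f(x),Cr')$, which follows from Gehring--Osgood quasi-invariance of $k$ and the elementary bounds relating $k_D$ to $|x-y|/d(x,\partial D)$. Both routes are sound and give the same two-sided comparison $\inf_{|h|=1}|Df(x)h|\simeq \|Df(x)\|\simeq\alpha_f(x)\simeq d(f(x),\partial\Omega')/d(x,\partial\Omega)$; yours is somewhat more elementary, avoiding the $A_\infty$ machinery and paralleling the Bloch-norm bound \eqref{bloch} already used in the proof of Theorem \ref{3.1}, and notably it does not use harmonicity of $f$ at all in the upper bound beyond the interior estimate, while the paper's version extracts both bounds from one pair of mean-value inequalities over the same ball $B_x$. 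Your closing observation is also accurate: because the statement is infinitesimal, only the local Koebe estimate of Theorem \ref{agt} is needed, not the global Harnack argument of Theorem \ref{avebdd}, which is why no convexity of $\Omega'$ is required.
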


\begin{proof} From \eqref{super5} and Theorem \ref{lgw} we get $\alpha_f(x)^3 \leq J_f(x)$, $x \in \Omega$. % \leq \| Df(x) \|^3$.
 On the other hand, $Df(x)$ is a vector valued harmonic function, whose norm is subharmonic and thus
$$  \| Df(x) \|^3 \leq \frac 1{m(B_x)}\int_{B_x}  \| Df \|^3\,dm  \leq \frac K{m(B_x)}\int_{B_x}  J_f \,dm $$
$$ \leq C(K,n) \exp [\frac 1{m(B_x)}\int_{B_x}\log J_f\,dm] = C(K,n) \alpha_f(x)^3,
 $$
 where the third inequality follows from the fact that $J_f$ is an $A_{\infty}$-Muckenhoupt weight. Thus 
 $ \alpha_f(x) \simeq \inf_{|h|=1}|Df(x)h| \simeq \sup_{|h|=1}|Df(x)h|$, and the claim follows as in \cite{man}.
\end{proof}

The proof of Theorem \ref{main} at the end of  the previous section gives immediately 

\begin{cor} Suppose $f: \B^3 \to \Omega$ is  quasiconformal. If $\Omega$ is convex and $f = \nabla u$ is the gradient of  a harmonic function, then $f$ has the co-Lipschitz property \eqref{colip}.
 \end{cor}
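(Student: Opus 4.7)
The plan is to essentially replay the argument that concluded the proof of Theorem \ref{main}, observing that $\B^3$ being the source was used only twice: once to invoke Theorem \ref{avebdd} (which required only that the \emph{target} be convex), and once to conclude co-Lipschitz-ness from a uniform lower bound on $\inf_{|h|=1}|Df(x)h|$. Both ingredients are still available in the present setting.

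First I will apply Theorem \ref{lgw} to $u$ on $\Omega = \B^3$: since $f = \nabla u$ is a homeomorphism, the Jacobian $J_f$ is everywhere positive and $\log J_f$ is superharmonic in $\B^3$. The superharmonic mean value inequality on the ball $B_x = B(x,d(x,\partial \B^3))$ then gives
\begin{equation*}
\log J_f(x) \;\geq\; \frac{1}{m(B_x)}\int_{B_x} \log J_f \, dm \;=\; 3\log \alpha_f(x),
\end{equation*}
so that $J_f(x) \geq \alpha_f(x)^3$. Combined with $K$-quasiconformality, which yields $J_f(x) \leq K^2 \inf_{|h|=1}|Df(x)h|^3$, this gives
\begin{equation*}
\inf_{|h|=1}|Df(x)h| \;\geq\; K^{-2/3}\,\alpha_f(x), \qquad x\in \B^3.
\end{equation*}

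Next, since $\Omega$ is convex, Theorem \ref{avebdd} applies directly to $f:\B^3 \to \Omega$ and gives $\alpha_f(x) \geq c_0\,d(f(0),\partial\Omega) > 0$ uniformly in $x\in \B^3$. Writing $m := c_0 K^{-2/3} d(f(0),\partial\Omega)$, the two bounds combine to the uniform estimate $\inf_{|h|=1}|Df(x)h| \geq m > 0$ on all of $\B^3$.

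Finally I pass from this pointwise lower bound to the global co-Lipschitz property by a standard path-pulling argument that \emph{uses convexity of the target $\Omega$, not of the domain}. Given $x,y\in \B^3$, the segment $[f(x),f(y)] \subset \Omega$ lifts under the homeomorphism $f$ to a rectifiable curve $\gamma \subset \B^3$ joining $x$ to $y$, and since $Df^{-1}(f(z))$ has operator norm at most $1/m$ wherever it is evaluated,
\begin{equation*}
|x-y| \;\leq\; \mathrm{length}(\gamma) \;=\; \int_{[f(x),f(y)]} \|Df^{-1}(w)\|\,|dw| \;\leq\; \frac{1}{m}\,|f(x)-f(y)|,
\end{equation*}
giving \eqref{colip} with $L = 1/m$. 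The only mildly delicate point is the lifting step, but it is routine once $f$ is a $C^1$ homeomorphism with non-vanishing Jacobian and the segment lies inside $\Omega$; apart from that the argument is a direct transcription of the final paragraph of the proof of Theorem \ref{main}.
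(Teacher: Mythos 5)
Your proposal is correct and follows essentially the same route as the paper: the paper proves this corollary simply by citing the final paragraph of the proof of Theorem \ref{main}, i.e.\ the chain $\alpha_f(x)^3 \leq J_f(x) \leq K^2 \inf_{|h|=1}|Df(x)h|^3$ obtained from Theorem \ref{lgw} plus quasiconformality, combined with the lower bound on $\alpha_f$ from Theorem \ref{avebdd} for convex targets. The only difference is that you make explicit the final path-lifting/integration step (which indeed uses convexity of $\Omega$ to keep the segment $[f(x),f(y)]$ inside the target), a step the paper leaves implicit.
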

 
 Similarly, method of Theorem 2.1. works for more general domains. We have the following result of Kalaj \cite{k4}.
 
 \begin{cor} If $f: \B^n \to \Omega$ is  a harmonic quasiconformal mapping, where $\Omega \subset \R^n$ is a domain with $C^2$-boundary, then $f$ is a Lipschitz mapping.
  \end{cor}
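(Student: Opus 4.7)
The plan is to follow the scheme of Theorem \ref{3.1} verbatim, replacing the auxiliary function $u(y)=1-|y|^2$ (whose role was to vanish on $\partial\B^n$ with non-vanishing gradient there) by a defining function for $\Omega$. Because $\partial\Omega$ is $C^2$, I can pick a bounded $\rho\in C^2(\overline{\Omega})$ with $\rho>0$ in $\Omega$, $\rho=0$ on $\partial\Omega$, and $|\nabla\rho|\geq c_0>0$ on some tubular collar $N=\{y\in\Omega:d(y,\partial\Omega)<\delta_1\}$; I will assume $\Omega$ is bounded so that $\|\rho\|_{C^2(\overline{\Omega})}<\infty$ (otherwise truncate $\rho$ far from $\partial\Omega$, the subsequent computation being local near the boundary).

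Set $w(x):=\rho(f(x))$. Since $f$ is harmonic, a direct computation using $\Delta f^i=0$ gives
\[
\Delta w(x)=\mathrm{tr}\bigl(D^2\rho(f(x))\,Df(x)\,Df(x)^t\bigr),
\]
so $|\Delta w(x)|\leq C_1|Df(x)|^2$ with $C_1=n\,\|D^2\rho\|_\infty$. The boundary condition $w\big|_{\Ss^{n-1}}=0$ follows from the continuous extension of a $K$-quasiconformal map of $\B^n$ onto a bounded $C^2$-smooth domain to a homeomorphism $\overline{\B^n}\to\overline{\Omega}$. From $\nabla w(x)=Df(x)^t\nabla\rho(f(x))$ and $K$-quasiconformality one has the pointwise bound
\[
|\nabla w(x)|\geq \frac{|Df(x)|}{K}\,|\nabla\rho(f(x))|,\qquad x\in\B^n,
\]
which is useful precisely where $|\nabla\rho(f(x))|$ is bounded below.

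Splitting $\B^n$ into $A=\{x:f(x)\in N\}$ and $B=\B^n\setminus A$, on $A$ the previous display gives $|Df(x)|^2\leq (K/c_0)^2|\nabla w(x)|^2$, and hence $|\Delta w|\leq a|\nabla w|^2$ with $a=C_1K^2/c_0^2$. On $B$ one has $d(f(x),\partial\Omega)\geq\delta_1$, and by the quantitative boundary continuity of $f$ on $\overline{\B^n}$ there is $\delta_2=\delta_2(K,n,\Omega)>0$ with $1-|x|\geq\delta_2$ for all $x\in B$; then the standard Bloch bound \eqref{bloch} for the harmonic map $f$ yields $|Df(x)|\leq c(n)\,\mathrm{diam}(\Omega)/\delta_2$, so $|\Delta w(x)|\leq b$ on $B$ for a constant $b$. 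Putting the two cases together gives the differential inequality $|\Delta w|\leq a|\nabla w|^2+b$ on all of $\B^n$.

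Finally, $|\nabla w|\leq \|\nabla\rho\|_\infty|Df|$ together with Gehring's theorem \eqref{geh2} applied to the $K$-quasiconformal mapping $f$ furnishes the initial integrability $\nabla w\in L^{p_0}(\B^n)$ for some $p_0>n$, so that Corollary \ref{ite} delivers $\|\nabla w\|_{L^\infty(\B^n)}\leq M<\infty$. Reading the lower bound on $|\nabla w|$ backwards on $A$ and combining with the Bloch bound on $B$ gives $\|Df\|_{L^\infty(\B^n)}<\infty$, i.e.\ $f$ is Lipschitz. The main technical point is the quantitative boundary continuity of $f$ used to produce $\delta_2$: this is where the $C^2$-regularity of $\partial\Omega$ enters essentially, via the standard modulus-of-continuity estimates for quasiconformal maps onto smooth domains; for merely Lipschitz boundaries one would need a more delicate substitute.
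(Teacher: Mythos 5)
Your proposal is correct and follows essentially the same route as the paper: the paper likewise sets $w(x)={\rm dist}(f(x),\partial\Omega)$ near $\partial\Omega$ (smoothly extended, i.e.\ your defining function $\rho$), verifies the differential inequality \eqref{bound}, and concludes via Corollary \ref{ite} and the argument of Theorem \ref{3.1}. The only difference is that the paper cites \cite{k4} for the verification of \eqref{bound}, whereas you carry it out explicitly by splitting $\B^n$ into the preimage of a boundary collar and its complement --- a useful elaboration, but not a different method.
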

 \begin{proof}
We take this time $w(x) = {\rm dist}(f(x), \partial \Omega)$ near $ \partial \Omega$, and choose some smooth extension to $\Omega$. Then $w$ satisfies the inequality \eqref{bound}, see \cite{k4}, so that $\|Ê\nabla w \|_{\infty} < \infty$ by Corollary \ref{ite} and we obtain the Lipschitz bounds for $f$ as in the proof of Theorem \ref{3.1}.
\end{proof}

Collecting the above information we also have

\begin{thm} 
Suppose  \, $\Omega$ is a convex subdomain of $\R^3$ with $C^2$-boundary, and let $f: \B^3 \to \Omega$ be a harmonic quasiconformal homeomorphism. If $f = \nabla u$ is a harmonic gradient mapping, the $f$ is  bi-Lipschitz.
\end{thm}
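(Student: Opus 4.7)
The plan is to combine the Lipschitz half of the preceding corollary (the $C^2$-boundary version of Kalaj's Lipschitz theorem just established) with the co-Lipschitz argument sketched at the end of Section 3 for the ball. The convexity and $C^2$-regularity of $\Omega$ play complementary roles: $C^2$-regularity is used only for the Lipschitz direction, while convexity is what forces the Euclidean (rather than merely quasihyperbolic) co-Lipschitz bound. Thus the proof will split cleanly into two halves, each of which invokes a result already in place.

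For the Lipschitz direction I would appeal to the preceding corollary directly: the auxiliary function $w(x) = \mathrm{dist}(f(x),\partial\Omega)$, smoothly extended across $\Omega$, satisfies the differential inequality \eqref{bound} thanks to the $C^2$-regularity of $\partial\Omega$, so Corollary \ref{ite} gives $\|\nabla w\|_{\infty} < \infty$, and the argument at the end of the proof of Theorem \ref{3.1} converts this into $\|Df\|_{\infty} < \infty$.

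For the co-Lipschitz direction I would mimic the computation that concludes Section 3. Because $f = \nabla u$ is a harmonic homeomorphism between three-dimensional domains, Theorem \ref{lgw} gives $J_f > 0$ everywhere in $\B^3$ together with $\log J_f$ superharmonic; the mean-value inequality then yields $J_f(x) \geq \alpha_f(x)^3$. Combining this with $K$-quasiconformality in the form $J_f(x) \leq K^2 \ell(Df(x))^3$, where $\ell(Df(x)) = \inf_{|h|=1}|Df(x) h|$, and with Theorem \ref{avebdd} applied to the convex target $\Omega$, I would obtain the uniform pointwise bound
\[ \ell(Df(x)) \geq K^{-2/3} \alpha_f(x) \geq c(K)\, d(f(0),\partial \Omega) > 0, \qquad x \in \B^3. \]
Convexity of $\Omega$ now lets me upgrade this pointwise estimate to a global one: any two points $p,q \in \Omega$ are joined by a segment $\gamma$ lying in $\Omega$; its pullback $f^{-1}(\gamma)$ is a curve in $\B^3$ of length at most $|p-q|/\inf_{\B^3}\ell(Df)$, so $|f^{-1}(p)-f^{-1}(q)| \leq L|p-q|$, which is the co-Lipschitz property.

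The main obstacle I anticipate is conceptual rather than technical: all the analytic content is already packaged in Theorems \ref{avebdd}, \ref{lgw} and Corollary \ref{ite}, so one only needs to check that the hypotheses of each can be met simultaneously. The delicate point is where convexity of $\Omega$ is really needed; it enters twice, first in the hypothesis of Theorem \ref{avebdd} (to produce the positive harmonic distance-to-hyperplane functions that drive the Harnack argument) and again in the segment-pullback step (to convert a pointwise lower bound on $\ell(Df)$ into a Euclidean co-Lipschitz estimate rather than a merely quasihyperbolic one).
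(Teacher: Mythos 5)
Your proposal is correct and follows the paper's own route exactly: the paper proves this theorem simply by ``collecting the above information,'' namely the Lipschitz bound from the $C^2$-boundary corollary (via $w = \mathrm{dist}(f(\cdot),\partial\Omega)$ and Corollary \ref{ite}) and the co-Lipschitz bound from Theorem \ref{lgw}, the inequality $\alpha_f^3 \leq J_f \leq K^2 \inf_{|h|=1}|Df\,h|^3$, and Theorem \ref{avebdd}. Your explicit segment-pullback step, turning the pointwise lower bound on $\ell(Df)$ into the Euclidean co-Lipschitz estimate via convexity of $\Omega$, is a detail the paper leaves implicit and is a welcome addition.
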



\begin{thebibliography}{99}

\bibitem{akm} Arsenovi\'{c} M., Koji\'{c} V. and Mateljevi\'{c} M., {\it On Lipschitz continuity of harmonic quasiregular maps on the unit ball in $\mathbb{R}^n$}, Ann. Acad. Sci. Fenn., Volumen 33, 2008, 315-318.

\bibitem{amm} M. Arsenovi\' c, V. Manojlovi\' c, M. Mateljevi\' c, {\it Lipschitz-type spaces and harmonic mappings in the space.},
Annales Academiae scientiarum Fennicae, Mathematica. vol. 35, 1 (2010) 379-387. 

%\bibitem{am} Arsenovi\' c M., Manojlovi\' c V., {\it On the modulus of continuity of harmonic quasiregular mappings on %the unit ball in $\mathbb R^n$},  Filomat 23 (2009), no. 3, 199-202.

\bibitem{abm} Arsenovi\' c M., Bozin V. and  Manojlovi\' c V.,   Potential analysis vol 34 (2011) 283--291.

\bibitem{ag} K. Astala, F. W. Gehring. {\it Quasiconformal analogues of theorems of Koebe and Hardy-Littlewood.}, Michigan Math. J. 32, 1985, 99-107.

%\bibitem{ak} K. Astala, P. Koskela. {\it Quasiconformal mappings and global integrability of the derivative}. J. Anal. Math. 57 (1991), 203--220.

\bibitem{she} S. Axler, P. Bourdon and W. Ramey. {\it Harmonic Function Theory.}, Springer Verlag New York 1992.

\bibitem{bm} V. Bozin and M. Mateljevi\' c. {\it Harmonic quasiconformal mappings between Jordan domains and related problems} (Preprint) 

\bibitem{duren} P. Duren. {\it Harmonic Mappings in the Plane}, Cambridge Tract in Mathematics 156, Cambridge Univ. Press 2004.

%\bibitem{Garnet} J. B. Garnet and D. E. Marshall, {\it Harmonic Measure}, Cambridge University Press 2005.

\bibitem{geh} F. W. Gehring. {\it   The $L^p$-integrability of the partial derivatives of a quasiconformal mapping}.  Acta Math. 130 (1973), 265--277.

\bibitem{gw} S.Gleason, T. Wolff, {\it Lewy's harmonic gradient maps in higher dimensions} Comm. Partial Differential Equations 16:12 (1991) 1925-1968.

\bibitem{gl} S. Gleason,  {\it Hessian Determinants of Harmonic Functions}, Thesis, New York University, 1990.

\bibitem{go} F. W. Gehring, B. Osgood, {\it Uniform domains and the quasihyperbolic metric}, J. Analyse Math. 36 (1979), 50-74.

\bibitem{gp} F. W. Gehring, B. Palka, {\it Quasiconformally homogeneous domains}, J. Analyse Math. 30 (1976), 172-199.

\bibitem{gt} D. Gilbarg, N. Trudinger, {\it Elliptic Partial Differential Equations of Second Order} Springer Verlag (1998)

%\bibitem{k} D. Kalaj, {\it Harmonic functions and quasiconformal mappings,  in Serbian:  Harmonijske funkcije i kvazikonformna preslikavanja},   Master thesis 1998.

%\bibitem{k1} D. Kalaj, {\it On harmonic quasiconformal self-mappings of the unit ball}, Ann. Acad. Sci. Fenn. %Mathematica, Volumen 33, 2008, 261-271.

\bibitem{k2} D. Kalaj, {\it Quasiconformal harmonic mapping
between Jordan domains},  Math. Z. Volume 260, Number 2, 237-252, 2008.

%\bibitem{k3} D. Kalaj,   {\it Lipschitz spaces and harmonic mappings}, Ann. Acad. Sci. Fenn. Mathematica, Volumen 34, 2009, pp 475-485.

\bibitem{k4} D. Kalaj, {\it A priori estimate of gradient of a solution to certain differential inequality and quasiconformal mappings.} Journal d'Analyse  Math. Volume 119, 2013, pp 63-88. 

\bibitem{k5} D. Kalaj,   {\it On  harmonic diffeomorphisms of the unit disk onto a convex domain}, Complex Variables 48 , 2003, 175-187.

\bibitem{kap} D. Kalaj, M. Pavlovi\' c {\it Boundary correspondence under quasiconformal harmonic diffeomorphisms of a half-plane}, Ann. Acad. Sci. Fenn. Math. Volumen 30,
    2005, pp 159-165.

%\bibitem{km} M. Knezevi\' c, M. Mateljevi\' c {\it On the quasi-isometries of harmonic quasiconformal mappings}, J. Math. Anal. Appl. 334,
   % 2007, pp 404-413.

\bibitem{kp} Vesna Koji\' c, Miroslav Pavlovi\' c, {\it Subharmonicity of $|f|^p$
    for quasiregular harmonic functions with applications}, J. Math. Anal. Appl.
    342 (2008), 742-746.

\bibitem{lewy} H. Lewy  {\it On the Non-Vanishing of the Jacobian of a Homeomorphism by Harmonic Gradients}
Ann of Math. 88 (1968) 518--529.

\bibitem{Om} O. Martio, {\it On harmonic quasiconformal mappings},
     Ann. Acad. Sci. Fenn., Ser. A I 425 (1968), 3-10.

%\bibitem{mat} Miodrag Mateljevi\' c, {\it Distortion  of harmonic functions and harmonic quasiconformal quasi-isometry},  Revue Roum. Math. Pures Appl. Vol.{\bf51}, (2006), 5-6, 711-722.

\bibitem{man} Vesna Manojlovi\' c, {\it Bi-Lipschicity of Quasiconformal Harmonic Mappings in the Plane.}, Filomat 23:1, 2009, 85-89.

\bibitem{mv} M. Mateljevi\' c, M. Vuorinen, {\it On harmonic quasiconformal
quasi-isometries}, Journal of Inequalities and Applications (2010) doi:10.1155/2010/178732.

%\bibitem{pa1} Pavlovi\'{c} M., {\it Remarks on $L^p$-oscillation of the modulus of a holomorphic function}, J. Math. Anal. Appl. 326 (2007), 1-11.

%\bibitem{pa2} Pavlovi\'{c} M., {\it Lipschitz conditions on the modulus of a harmonic function}, Revista Mat. Iberoamericana, 23 (2007), no. 3, 831-845.

\bibitem{pa3} Pavlovi\'{c} M., {\it Boundary correspondence under harmonic quasiconformal homeomorphisms of the unit disc}, Ann. Acad. Sci. Fenn., Volumen 27, 2002, 365-372.

\bibitem{stein} E. Stein, {\it Singular Integrals and Differentiability Properties of Functions}. Princeton University Press, 1970. 

\bibitem{vuo} M. Vuorinen, {\it Conformal Geometry and quasiregular mappings}, Lecture Notes in Math., 1319, Springer, Berlin, 1988

\end{thebibliography}
\end{document}